\newtheorem{theorem}{Theorem}[section]
\newtheorem{lemma}[theorem]{Lemma}
\newtheorem{corollary}[theorem]{Corollary}
\theoremstyle{definition}
\newtheorem{definition}[theorem]{Definition}
\newtheorem{remark}[theorem]{Remark}
\providecommand{\customgenericname}{}
\newcommand{\newcustomtheorem}[2]{%
  \newenvironment{#1}[1]
  {%
   \renewcommand\customgenericname{#2}%
   \renewcommand\theinnercustomgeneric{##1}%
   \innercustomgeneric
  }
  {\endinnercustomgeneric}
}
\newcommand{\IR}{\mathbb{R}}
\newcommand{\IC}{\mathbb{C}}
\newcommand{\IN}{\mathbb{N}}
\newcommand{\IF}{\mathbb{F}}
\newcommand{\IE}{\mathbb{E}}
\newcommand{\cR}{\mathcal{R}}
\newcommand{\cA}{\mathcal{A}}
\newcommand{\cB}{\mathcal{B}}
\newcommand{\HT}{\mathcal{HT}}
\newcommand{\g}{\Gamma}
\newcommand{\tildu}{\widetilde{u}}
\newcommand{\tildv}{\widetilde{v}}
\newcommand{\tildf}{\widetilde{f}}
\newcommand{\tildg}{\widetilde{g}}
\newcommand{\overw}{\overline{w}}
\newcommand{\eps}{\varepsilon}
\numberwithin{equation}{section}
\title[Higher regularity for parabolic equations]{Higher regularity for parabolic equations based on maximal $L_p$-$L_q$ spaces}
\author{Naoto Kajiwara}
\address{Department of Mathematics, Faculty of Science and Technology, Tokyo University of Science, Noda, Chiba, 278-8510, Japan}
\email{kajiwara\_naoto@ma.noda.tus.ac.jp}
\thanks{This work was supported by JSPS KAKENHI Grant Number 19K23408.}
\begin{document}

\begin{abstract}
In this paper we prove higher regularity for $2m$-th order parabolic equations with general boundary conditions. 
This is a kind of maximal $L_p$-$L_q$ regularity with differentiability, i.e. the main theorem is isomorphism between the solution space and the data space using Besov and Tribel--Lizorkin spaces. 
The key is compatibility conditions for the initial data. 
We are able to get a unique smooth solution if the data satisfying compatibility conditions are smooth.  
\end{abstract}

%
\maketitle
\section{Introduction}\label{Intro}

We consider the following parabolic evolution equations; 
\begin{align}\label{eq_para}
\left\{
	\begin{aligned}
		\partial_t u + \omega u + \cA(x, D) u
			&= f(t,x) \quad
				&(t\in \IR_+,~x\in G), \\
		\cB_j(x,D) u
			&= g_j(t,x) \quad 
				&(t\in  \IR_+,~x\in \g, j=1,\ldots, m), \\
		u(0,x)
			&= u_0(x) \quad 
				&(x\in G). 
	\end{aligned}
\right. 
\end{align}
The operators are $\cA(x,D)=\sum_{|\alpha|\le2m} a_\alpha(x)D^\alpha$, $\cB(x,D)=\sum_{|\beta|\le m_j} b_{j\beta}(x) D^\beta$ with $0\le m_j < 2m$, $m \in \IN$, where $D=-i \partial_x$. 
The domain $G$ is bounded or exterior domain with a smooth boundary $\g:=\partial G$. 
Given $f$, $\{g_j\}_{j=1}^m$ and $u_0$, we show the unique solvability of $u$ in the point of maximal $L_p$-$L_q$ regularity, which means the isomorphism between them. 
One character of our paper is to treat higher regularity of data and solution. 
Our main theorem is that the solution belongs to a suitable regularity class when the data are $L_p$ in time and $L_q$ in space with differentiability and vice versa. 
The proof is based on the induction argument. 
To use this argument, we need to focus on the compatibility conditions of the initial data. 
When we require the regularity of the solution we should take care of the initial boundary data. 
This compatibility assumption is essential because we construct the theorem by sufficient and necessary. 

There are a lot of results on maximal regularity and relevant operator theory. 
We refer to \cite{DenkHieberPruss2003, DenkHieberPruss2007, KunstmannWeis2004} for parabolic and elliptic type, \cite{DenkKaip2013} for mixed order type, \cite{Denk-Pruss-Zacher2008} for relaxation type and \cite{FurukawaKajiwara2020} for quasi-steady type. 
In particular we refer to the comprehensive book \cite{PrussSimonett2016}. 
This book is written various method, results and even higher order maximal $L_p$-$L_q$ regularity for parabolic and elliptic equations. 
However only the space regularity for the higher regularity is considered in parabolic case, i.e. base space is $L_p$ in time and $H^s_q$ in space. 
Therefore the compatibility conditions became very complicated. 
In fact they stated the case $s=1$ but they avoided another case $s$. 
On the other hand there is a higher regularity based on maximal $L_2$-$L_2$ regularity in the book \cite{Evans2010} while the equation is second order and zero Dirichlet boundary conditions. 
This is written the result and compatibility clearly. 
The results of our paper is generalizations to the $L_p$-$L_q$ settings and $2m$-th order equation with general boundary conditions. 
The key is to set the base space $H^k_p(\IR_+; L_q(G))\cap L_p(\IR_+; H^{2mk}_q(G))$. 
The function spaces of initial data and boundary data are used Besov and Triebel--Lizorkin spaces. 
This is similar to the classical case $k=0$, but we give the proof again. 
The compatibility conditions are required for the differentiability index $k$. 
 By considering $k$ inductively, we are able to get the smooth solution as a corollary of the main theorem. 
 
 This paper is organized as follows. 
 In section \ref{Main} we consider the equation more precisely. 
 After setting the situation and assumption, we state our main theorem. 
 In section \ref{Pre} we prepare for the definitions and lemmas to prove the necessity of the theorem. 
 In section \ref{proof} we prove the main theorem. 
 The sufficient conditions are proved by the induction argument as we can see the book \cite{Evans2010}. 
 The necessity conditions are almost similar to the results on \cite{DenkHieberPruss2007, PrussSimonett2016} which is explained in the section  \ref{Pre}. 

\section{Settings and main theorem}\label{Main}
\subsection{Some settings}
At first we define some function spaces to state our main theorems. 
Few terminologies are given in the section \ref{Pre}. 
For almost all definitions and notations, see \cite{DenkKaip2013, PrussSimonett2016, Lunardi2009, Triebel1983}. 

Let $X$ be a Banach space, $\IN_0:=\IN\cup\{0\}$ and $r\geq 0, 1<p,q<\infty$. 
We define the {\it vector-valued Besov and Triebel--Lizorkin spaces} by 
\begin{align*}
B^r_{p,q}(\IR^n; X)
	&:= \{u\in\mathcal{S}'(\IR^n; X)\mid \|u\|_{B^r_{p,q}(\IR^n; X)}:=\|(2^{rj} \|\mathcal{F}^{-1}[\phi_j \mathcal{F} u]\|_{L_p(\IR^n; X)})_{j\in\IN_0}\|_{l_q}<\infty\}, \\
F^r_{p,q}(\IR^n; X)
	&:= \{u\in\mathcal{S}'(\IR^n; X)\mid \|u\|_{F^r_{p,q}(\IR^n; X)}:=\|\|(2^{rj} \mathcal{F}^{-1}[\phi_j \mathcal{F} u])_{j\in\IN_0}\|_{l_q(X)}\|_{L_p(\IR^n)}<\infty\}, 
\end{align*}
where $\{\phi_j\}_{j\in\IN_0}$ is Littlewood--Paley smooth dyadic decomposition. 
Moreover for a domain $\Omega\subset \IR^n$ we define $B^r_{p,q}(\Omega; X)$ and $F^r_{p,q}(\Omega; X)$ by a restriction of $B^r_{p,q}(\IR^n; X)$ and $F^r_{p,q}(\IR^n; X)$ to $\Omega$. 

Note that 
\begin{align*}
B^r_{p,q}(\Omega; X) &= (H^{k_0}_p(\Omega; X), H^{k_1}_p(\Omega; X))_{\theta, q},\quad k_0, k_1\in\IN_0,~\theta\in(0,1),~r:=(1-\theta)k_0 + \theta k_1, 
\end{align*}
where $(\cdot, \cdot)_{\theta, q}$ is real interpolation. 
We use this relation later. 

\begin{definition}
(a) A Banach space $X$ is said to be of class $\HT$ if the Hilbert transform $H$ defined by 
\begin{align*}
Hf(t) :=\frac{1}{\pi} \lim_{R\to\infty} \int_{R^{-1}\le |s| \le R} f(t-s) \frac{ds}{s}
\end{align*}
is bounded on $L_p(\IR; X)$ for some $p\in(1, \infty)$. \\
(b) A Banach space $X$ is said to have property $(\alpha)$ if there exists a constants $C>0$ such that 
\begin{align*}
\left|\sum_{i,j=1}^N \alpha_{ij}\eps_i \eps_j^{\prime} x_{ij}\right|_{L_2(\Omega\times\Omega'; X)}
\le C \left|\sum_{i,j=1}^N \eps_i \eps_j^{\prime} x_{ij}\right|_{L_2(\Omega\times\Omega'; X)}
\end{align*}
for all $\alpha_{ij}\in\{-1, 1\}$, $\{x_{ij}\}_{i,j=1}^N\subset X$, positive integer $N$, and all symmetric independent $\{-1, 1\}$-valued random variables $\eps_i$ (respectively $\eps_j^\prime$) on a probability space $(\Omega, \cA, \mu)$ (respectively $(\Omega', \cA', \mu')$). \\
(c) $\HT(\alpha)$ denotes the class of Banach spaces which belong to $\HT$ and have property $(\alpha)$. 
\end{definition}
We consider the function $u(t,x)\in E\in \HT(\alpha)$ and note that $\IC^{N\times N}$ is a Banach space of class $\HT(\alpha)$, so we are able to consider a system of usual parabolic equations. 

We prepare for function spaces of $f, \{g_j\}_{j=1}^m$. 
Let $\kappa_j = 1 - \frac{m_j}{2m} - \frac{1}{2mq}$, 
\[
\IF^k_1 := H^k_p(\IR_+; L_q(G))\cap L_p(\IR_+; H^{2mk}_{q}(G)), \qquad
\IF^{k, m_j}_2:= F^{k+\kappa_j}_{p,q}(\IR_+; L_q(\g)) \cap L_p(\IR_+; B^{2mk+\kappa_j}_{q,q}(\g)). 
\]

We need to consider the operators $(\cA(x, D), \cB_1(x,D), \ldots, \cB_m(x,D))$. 
For the regularity of the operators, we assume the following assumptions ${\rm (R_k)}$; \\
${\rm (R_1)}$ If $k=0$, for each $|\alpha|=2m$ $a_\alpha$ is a bounded continuous, and ${\displaystyle\lim_{|x|\to\infty}} a_{\alpha}$ exists if $\Omega$ is unbounded. \\
${\rm (R_2)}$ $a_\alpha\in H^k_{r_l}(\Omega; \cB(E)) + H^k_\infty(\Omega; \cB(E))$ for each $|\alpha|=l\le 2m$, with $r_l \ge q$ and $2m + k -l >n/r_l$. \\
${\rm (R_3)}$ $b_{j\beta} \in B^{2m\kappa_j + k}_{r_{jl}, q}(\g; \cB(E))$ for a each $|\beta|=l<m_j$, with $r_{jl}\geq q$, and $2m\kappa_j + k > (n-1)/r_{jl}$. 
Above regularity assumption is same as higher order elliptic problems. 
See \cite{PrussSimonett2016}. 

\begin{definition}
We call the system $(\cA(x, D), \cB_1(x,D), \ldots, \cB_m(x,D))$ uniformly normally elliptic if \\
(i) $\cA(x,D)$ is normally elliptic in the sense that infimum of an angle $\phi\in[0, \pi)$ such that $\sigma(\cA(x,\xi))\subset \Sigma_\phi$ for all $\xi\in\IR^n, |\xi|=1$ is less than $\pi/2$, for each $x\in \overline{G}\cup \{\infty\}$. \\
(ii) The following the Lopatinskii--Shapiro condition (LS) holds, for each $x\in \g$. \\
(LS) We rewrite the equations (\ref{eq_para}) in coordinates associated with $x$ so that the positive part of $x_n$-axis has the direction of the inner normal at $x$ after a transformation and a rotation.
For all $(\lambda, \xi')\in (\Sigma_\theta \times\IR^{n-1} )\setminus \{ (0,0) \}$ ($\theta > \pi / 2$) and $\{g_j\}_{j=0}^m\in E^m$, the ODEs on the half line $\IR_+$ given by 
\begin{align}\label{eq_ls}
\left \{\begin{array}{rll}
	\lambda v(y) + \cA_\#(x,\xi', D_y) v(y) 
				&= 0 \quad &(y>0), \\
	\cB_{j\#}(x,\xi', D_y) v(0)
				&= g_j \quad 
				&(j = 1, \cdots, m)
		\end{array}
		\right.
\end{align}
admit a unique solution $v \in C_0^{2m}( \IR_+; E)$, where the subscript $\#$ denotes the principal part of the corresponding operator $\cA_\#(x,D) =\sum_{|\alpha|=2m} a_\alpha(x) D^\alpha, \cB_{j\#}(x,D) =\sum_{|\beta|=m_j} b_{j\beta} (x) D^\beta$ and 
\begin{align*}
\Sigma_{\phi}&:=\{z\in\IC\setminus\{0\}\mid |\arg z|<\phi\}, \\
C^{2m}_0 ( \IR_+; E) &:= \left \{ v \in C^{2m} ( \IR_+; E) \, ; \, \lim_{y \rightarrow \infty} v (y) = 0 \right \}. 
\end{align*}
\end{definition}

\subsection{main theorem}
\begin{theorem}\label{main}
Let $G\subset \IR^n$ be open with compact boundary of class $C^{2m+k}$, $1<p,q<\infty$, and let $E$ be a Banach space of class $\mathcal{HT}(\alpha)$. 
Assume that $(\cA(x, D), \cB_1(x,D), \ldots, \cB_m(x,D))$ is uniformly normally elliptic and satisfies $({\rm R_k})$. 
Let $\kappa_j \neq 1/p$ for all $j$. 
Then there is $\omega_0\in\IR$ such that for each $\omega>\omega_0$, equation \eqref{eq_para} admits a unique solution $u$ in the class 
\[ u \in \IE^k := H^{k+1}_p(\IR_+; L_q(G))\cap L_p(\IR_+; H^{2m(k+1)}_{q}(G)), \]
if and only if the data are subject to the following conditions. 
\begin{align*}
&(f, \{g_j\}_{j=1}^m) 
	\in \IF^k_1 \times \prod_{j=1}^m \IF^{k, m_j}_2, \\
&v_0:=u_0
	\in B^{2m(k+1-\frac{1}{p})}_{q,p}(G)
		\quad {\rm with}\quad \cB_j v_0 = g_j|_{t=0}, \\
&v_1:=f|_{t=0}- (\omega+\cA(x,D)) v_0
 	\in B^{2m(k-\frac{1}{p})}_{q,p}(G)
 		\quad {\rm with}\quad \cB_j v_1 = (\partial_t g_j)|_{t=0}, \\
&v_2:=(\partial_t f)|_{t=0}- (\omega + \cA(x,D)) v_1
 	\in B^{2m(k-1-\frac{1}{p})}_{q,p}(G)
 		\quad {\rm with}\quad \cB_j v_2 = (\partial_t^2 g_j)|_{t=0}, \\
&\qquad \vdots\\
&v_{k-1}:=(\partial_t^{k-2} f)|_{t=0} - (\omega + \cA(x,D))v_{k-2} 
	\in B^{2m(2-\frac{1}{p})}_{q,p}(G)
		\quad {\rm with}\quad \cB_j v_{k-1} = (\partial_t^{k-1} g_j)|_{t=0}, \\
&\qquad \rm{and}\\
&v_k:=(\partial_t^{k-1}f)|_{t=0} - (\omega + \cA(x,D))v_{k-1} 
	\in B^{2m(1-\frac{1}{p})}_{q,p}(G) 
		\quad {\rm with}\quad \cB_j v_k = (\partial_t^k g_j)|_{t=0}
	\quad {\rm if} \quad \kappa_j >\frac{1}{p}. 
\end{align*}
The solution depends continuously on the data in the corresponding spaces, i.e. there is $C>0$ such that 
\[\|u\|_{\IE^k} \leq C \left(\|f\|_{\IF^k_1} + \sum_{j=1}^m \|g_j\|_{\IF^{k,m_j}_2} + \|u_0\|_{B^{2m(k+1-\frac{1}{p})}_{q,p}(G)} \right) \]
for all $(f, g)$ satisfying conditions. 
\end{theorem}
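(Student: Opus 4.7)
The plan is to prove necessity by applying trace theorems for vector-valued anisotropic spaces, and to prove sufficiency by induction on $k$, with the classical maximal $L_p$-$L_q$ regularity theorem of \cite{DenkHieberPruss2007, PrussSimonett2016} as the base case $k=0$.

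For necessity: given $u\in\IE^k$, the mixed derivative theorem together with the interpolation identity between $H^{k+1}_p(\IR_+;L_q(G))$ and $L_p(\IR_+;H^{2m(k+1)}_q(G))$ shows that $\partial_t^j u\in\IE^{k-j}$ for $0\le j\le k$. The time-trace theorem then yields $v_j:=\partial_t^j u|_{t=0}\in B^{2m(k-j+1-1/p)}_{q,p}(G)$, and differentiating the interior equation $j$ times in $t$ produces the recursion $v_{j+1}=\partial_t^j f|_{t=0}-(\omega+\cA)v_j$. Taking the spatial boundary trace of each $\partial_t^i u$ and matching with $\partial_t^i g_j|_{t=0}$ (which is a well-defined $t$-trace of $g_j\in\IF^{k,m_j}_2$ precisely when $\kappa_j>1/p$ at the top order $i=k$) gives the compatibility $\cB_j v_i=\partial_t^i g_j|_{t=0}$, matching the statement of the theorem.

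For sufficiency at level $k\ge 1$ assume the theorem at level $k-1$. Because the level-$k$ hypotheses imply the level-$(k-1)$ ones, the induction hypothesis provides a unique $u\in\IE^{k-1}$ solving \eqref{eq_para}. Apply the induction hypothesis once more to the differentiated problem for $w:=\partial_t u$:
\begin{align*}
\partial_t w+\omega w+\cA w&=\partial_t f,\\
\cB_j w&=\partial_t g_j,\\
w(0)&=v_1.
\end{align*}
The data $(\partial_t f,\{\partial_t g_j\},v_1)$ lie in the correct level-$(k-1)$ classes by the mixed derivative theorem applied to $\IF^k_1$ and $\IF^{k,m_j}_2$, and the compatibility $\cB_j v_{i+1}=\partial_t^{i+1}g_j|_{t=0}$ from level $k$ (for $i=0,\ldots,k-1$) is exactly the level-$(k-1)$ compatibility of the shifted tuple, with the $\kappa_j>1/p$ dichotomy preserved at the top. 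Uniqueness at level $k-1$ forces $w=\partial_t u$, hence $\partial_t u\in\IE^{k-1}$, so $u\in H^{k+1}_p(\IR_+;L_q(G))$. The spatial regularity is then upgraded by rewriting the interior equation at a.e.\ $t$ as the elliptic boundary value problem
\[
\cA u=f-\omega u-\partial_t u,\qquad \cB_j u=g_j,
\]
whose right-hand side lies in $L_p(\IR_+;H^{2mk}_q(G))$ and whose boundary data lie in the appropriate Besov class on $\Gamma$; higher-order elliptic $L_q$-regularity under $({\rm R_k})$, as recorded in \cite{PrussSimonett2016}, yields $u\in L_p(\IR_+;H^{2m(k+1)}_q(G))$, and the quantitative bound follows by combining the level-$(k-1)$ estimate for $(u,\partial_t u)$ with the elliptic estimate.

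The main obstacle is the careful bookkeeping of compatibility under time differentiation: one must verify that the level-$(k-1)$ compatibility for $(\partial_t f,\{\partial_t g_j\},v_1)$ corresponds term-by-term to the level-$k$ compatibility for the original data, including the case distinction $\kappa_j\gtrless 1/p$ that governs whether the top-order boundary trace is imposed. A secondary technical point is to ensure that the mixed derivative theorem and the relevant anisotropic trace identifications remain valid in the vector-valued setting with $E\in\HT(\alpha)$; this is provided by the operator-sum and $\cR$-boundedness machinery from \cite{DenkHieberPruss2003, KunstmannWeis2004}.
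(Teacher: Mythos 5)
Your proposal follows essentially the same route as the paper: sufficiency by induction on $k$, differentiating the equation in time, verifying that the shifted data $(\partial_t f,\{\partial_t g_j\},v_1)$ satisfy the lower-level compatibility conditions (including the $\kappa_j\gtrless 1/p$ dichotomy at the top order), and then upgrading spatial regularity through the higher-order elliptic theory of \cite{PrussSimonett2016}; necessity via the time-trace characterization by real interpolation and the boundary-trace theorem of \cite{DenkHieberPruss2007}. The only cosmetic difference is that you identify the solution of the differentiated problem with $\partial_t u$ via uniqueness, a point the paper's write-up glosses over.
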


\begin{remark}
Since we would like to construct the global estimate, i.e. time interval is $\IR_+$, we need the term $\omega u$ in the equation \eqref{eq_para}. 
If we replace the time interval $[0, T]$ for some $T>0$, we are able to set $\omega =0$. 
\end{remark}

\begin{corollary}
Let $T>0$. 
Assume $\Omega$ is a open with compact boundary of class $C^\infty$, $f\in C^\infty([0, T] \times \overline{G})$, $g_j \in C^\infty([0, T]\times \g)$ for all $j$, $u_0\in C^\infty(\overline{G})$ and the $k$-th order compatibility conditions hold for $k=1, 2, \cdots$. 
Then the parabolic equations \eqref{eq_para} has a unique solution 
\[ u\in C^\infty([0, T] \times \overline{G}). \]
\end{corollary}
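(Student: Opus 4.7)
The approach is a bootstrap argument: apply Theorem \ref{main} at every differentiability level $k$ and then extract joint $C^\infty$ regularity via anisotropic Sobolev embeddings. By the Remark one may work on the finite interval $[0,T]$ with $\omega = 0$. Fix $k \in \IN_0$; the smoothness of $f, g_j, u_0$ together with compactness of $[0,T]$ guarantees $f \in \IF^k_1$, $g_j \in \IF^{k, m_j}_2$, and $u_0 \in B^{2m(k+1-1/p)}_{q,p}(G)$, while the assumed $k$-th order compatibility conditions are precisely the tower $\cB_j v_l = (\partial_t^l g_j)|_{t=0}$ appearing in the theorem. Theorem \ref{main} therefore produces a unique $u^{(k)} \in \IE^k$, and uniqueness at the $k=0$ level forces all of these to coincide with a single $u$ satisfying $u \in \IE^k$ for every $k$.

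The next step is to establish $\bigcap_{k\ge 0} \IE^k \hookrightarrow C^\infty([0,T]\times\overline{G})$. The mixed-derivative theorem for the maximal regularity class (valid on the UMD space $L_q(G)$, hence in the $\HT(\alpha)$ setting) provides
\begin{align*}
\IE^k = H^{k+1}_p(0,T; L_q(G)) \cap L_p(0,T; H^{2m(k+1)}_q(G)) \hookrightarrow H^{L+1}_p(0,T;\, H^{2m(k-L)}_q(G))
\end{align*}
for every integer $L$ with $0 \le L \le k$. Since $p > 1$, the one-dimensional Sobolev embedding yields $H^{L+1}_p(0,T) \hookrightarrow C^L([0,T])$, and the standard Sobolev embedding on $G$ (applied locally if $G$ is exterior, globally if bounded) gives $H^{2m(k-L)}_q(G) \hookrightarrow C^M(\overline{G})$ whenever $2m(k-L) > M + n/q$. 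Composing,
\begin{align*}
\IE^k \hookrightarrow C^L([0,T];\, C^M(\overline{G}))
\end{align*}
holds for every $L, M \ge 0$ and every sufficiently large $k$ (concretely, $k > L + M/(2m) + n/(2mq)$). As $L$ and $M$ are arbitrary, every mixed derivative $\partial_t^l \partial_x^\alpha u$ is continuous on $[0,T]\times\overline{G}$, so $u \in C^\infty([0,T]\times\overline{G})$.

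The only nontrivial ingredient is the first embedding above; the rest is bookkeeping of classical Sobolev embeddings. I expect this mixed-derivative step to be the main obstacle, but it is standard in the $\HT(\alpha)$ framework and follows, for example, from Sobolevskii's mixed-derivative theorem applied to $\partial_t$ paired with a fixed reference elliptic realization of $\cA$, or directly from operator-valued Mikhlin multiplier theorems on the vector-valued Sobolev scale. Once it is in hand, the whole argument is a finite composition of well-known embeddings, and the corollary follows immediately from Theorem \ref{main}.
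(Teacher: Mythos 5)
Your proposal is correct and follows exactly the route the paper takes, whose entire proof is ``Apply Theorem \ref{main} for $k=0,1,\ldots$ and use embedding theorem''; you have simply filled in the details (iteration over $k$, uniqueness gluing, the mixed-derivative theorem, and the Sobolev embeddings) that the paper leaves implicit.
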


\begin{proof}
Apply theorem \ref{main} for $k=0,1,\ldots$ and use embedding theorem. 
\end{proof}

\section{Preliminaries}\label{Pre}

\subsection{Trace to the initial data}
In this subsection we consider the trace to the initial data. 
This needs the necessity of the maximal regularity. 
By the real interpolation theory, we are able to characterize the initial space as well as the usual $k=0$ case. 
This is a famous result, but we reconsider the higher order regularity. 
The method is almost same. 

\begin{definition}
Let $A$ be a closed linear operator in $X$. 
$A$ is called sectorial if the following two conditions are satisfied. \\
${\rm (i)}$~$\overline{D(A)}=\overline{R(A)}=X$, $(-\infty, 0)\subset \rho(A)$. \\
${\rm (ii)}$~$|t(t+A)^{-1}| \le M$ for all $t>0$, and some $M>0$. \\
If $(-\infty, 0)\subset \rho(A)$ and only (ii) hold then $A$ is said to be pseudo-sectorial. 
For the pseudo-sectorial operator, spectral angle is given by 
\[\phi_A:=\inf\{\phi \mid \rho(-A)\supset\Sigma_{\pi-\phi},~\sup_{\lambda\in\Sigma_{\pi-\phi}}|\lambda(\lambda+A)^{-1}|<\infty\}. \]
\end{definition}

\begin{definition}
Let $A$ be a densely defined pseudo-sectorial operator with spectral angle $\phi_A<\pi/2$ in $X$. 
Let $\alpha\in(0,1)$ and $p\in[1, \infty)$. 
The space $D_A(\alpha, p)$ are defined by means of 
\[D_A(\alpha, p) := \{x\in X \mid [x]_{\alpha,p}:= (\int_0^\infty |t^{1-\alpha} Ae^{-tA}x|^p\frac{dt}{t})^{1/p}<\infty\}.\]
When equipped with the norm 
\[|x|_{\alpha, p} := |x| + [x]_{\alpha,p},\quad x\in D_A(\alpha, p),\]
$D_A(\alpha, p)$ becomes a Banach space. 
For $k\in \IN$ the spaces $D_A(k+\alpha, p)$ are defined by 
\[D_A(k+\alpha, p):=\{x\in D(A^k)\mid A^k x\in D_A(\alpha,p)\}, \quad |x|_{k+\alpha,p}:=|x|+[A^kx]_{\alpha,p}.\]
\end{definition}

\begin{lemma}\label{initial trace}
Suppose $A$ is a densely defined invertible sectorial operator in $X$ with spectral angle $\phi_A<\pi/2$, $p\in(1,\infty)$ and $k\in\IN\cup\{0\}$. 
Then for the function $u(t):=e^{-tA}x$ the following assertions are equivalent. \\
$(a)$~$u(t)\in D(A^{k+1})$ for a.e. $t>0$, and $u\in L_p(\IR_+; D(A^{k+1}))$. \\
$(b)$~$u\in H^{k+1}_p(\IR_+; X)$\\
$(c)$~$x\in D_A(k+1-1/p, p)$\\
In this case there is a constant $C$ depending only on $A$ and $p$ such that 
\[|u|_{H^{k+1}_p(\IR_+; X)} + |u|_{L_p(\IR_+; D(A^k))} \le C |x|_{k+1-1/p,p}.\]
\end{lemma}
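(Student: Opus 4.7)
The plan is to reduce the three-way equivalence to the classical case $k=0$, which is the standard characterization of the trace space of maximal regularity for a sectorial operator (as in \cite{PrussSimonett2016}). Two observations about the semigroup orbit $u(t)=e^{-tA}x$ drive the proof. First, one has the operator identity $\partial_t^j u = (-A)^j u$, valid whenever one side is defined, which interchanges time differentiation with $A$-application. Second, since $\alpha = 1-1/p$ gives $1-\alpha = 1/p$ and $(t^{1/p})^p/t = 1$, a direct substitution yields
\begin{align*}
[x]_{k+1-1/p,p}^p = \int_0^\infty \bigl\|t^{1/p} A e^{-tA}(A^k x)\bigr\|^p \, \frac{dt}{t} = \int_0^\infty \|A^{k+1} e^{-tA} x\|^p \, dt = \|A^{k+1}u\|_{L_p(\IR_+;X)}^p,
\end{align*}
which identifies the top-order seminorm of (c) with the top-order $L_p$-norm in (a).

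From these, $(a) \Leftrightarrow (b)$ is immediate: invertibility of $A$ combined with $\phi_A < \pi/2$ forces exponential decay of $e^{-tA}$, so $u \in L_p(\IR_+;X)$ automatically, and $\|\partial_t^j u\|_{L_p} = \|A^j u\|_{L_p}$ for every $j \leq k+1$, with intermediate orders controlled by the sectorial moment inequality $\|A^j y\| \leq C \|y\|^{1-j/(k+1)} \|A^{k+1}y\|^{j/(k+1)}$. For $(c)\Rightarrow(a)$, I would set $y := A^k x$, which lies in $D_A(1-1/p,p)$ by definition of (c), apply the $k=0$ version of the lemma to $v(t):=e^{-tA}y$ to obtain $v \in H^1_p(\IR_+;X)\cap L_p(\IR_+;D(A))$ with norm bounded by $|y|_{1-1/p,p}$, and then read off the conclusion using $v = A^k u$; the lower-order terms $A^j u$ with $j\le k$ are recovered from exponential decay of the semigroup. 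For $(a)\Rightarrow(c)$, I would proceed by induction on $k$: the base case is the classical result, and for the induction step the key point is to apply the $k=0$ trace embedding $H^1_p(\IR_+;X)\cap L_p(\IR_+;D(A))\hookrightarrow \mathrm{BUC}(\IR_+;D_A(1-1/p,p))$ to the function $A^k u$, which secures $x\in D(A^k)$ with $A^k x \in D_A(1-1/p,p)$, i.e.\ $x \in D_A(k+1-1/p,p)$.

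The main obstacle is the careful bookkeeping of low-order pieces: one must verify that not only the top-order seminorm but the full norm $|x|_X + [x]_{k+1-1/p,p}$ controls $|u|_{H^{k+1}_p(\IR_+;X)} + |u|_{L_p(\IR_+;D(A^k))}$, and conversely. This is handled using sectorial moment inequalities together with the exponential bound $\|e^{-tA}\|_{\cB(X)}\leq C e^{-\delta t}$ for some $\delta > 0$, which stabilize both ends of the regularity scale. The real content of the lemma is the seminorm identity displayed above; once this and the $k=0$ case are granted, the inductive step is routine.
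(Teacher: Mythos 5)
Your proposal is correct and rests on exactly the same two facts as the paper's own proof: the commutation identity $\partial_t^j u = (-A)^j u$ together with exponential decay of $e^{-tA}$ (giving $(a)\Leftrightarrow(b)$ and the lower-order bounds), and the seminorm identity $[A^k x]_{1-1/p,p}^p = \|A^{k+1}u\|_{L_p(\IR_+;X)}^p$ (giving the equivalence with $(c)$). The only difference is presentational — you package the argument as a reduction to $k=0$ plus induction, and you justify $x\in D(A^k)$ via the $\mathrm{BUC}$-trace embedding where the paper computes directly — but the mathematical content is identical.
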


\begin{proof}
By a standard semigroup theory, we have $e^{-tA}X\subset D(A^{k+1})$ for $t>0$ and, with some $\omega>0$, 
\[|e^{-tA}| + t|Ae^{-tA}| \le Me^{-\omega t}, \quad t>0.\]
By definition, $x\in D_A(k+1-1/p,p)$ implies $A^k x \in D_A(1-1/p,p)$, i.e. $Ae^{-tA}(A^k x) \in L_p(\IR_+; X)$. 
This and commutativity of $A$ and $e^{-tA}$ mean $e^{-tA}x \in L_p(\IR_+; D(A^{k+1}))$, hence $(c)\Rightarrow (a)$. 
Since $e^{-tA}$ is holomorphic and $\frac{d}{dt} e^{-tA}=-Ae^{-tA}$ for $t>0$, $\frac{d^{k+1}}{dt^{k+1}}u = (-A)^{k+1} u \in L_p(\IR_+; X)$ if we assume $(a)$, hence $(a)\Rightarrow (b)$. 
On the other hand, $(b)$ yields $A^{k+1}u=(-1)^{k+1} \frac{d^{k+1}}{dt^{k+1}} u \in L_p(\IR_+;X)$ and 
\[[A^kx]_{1-1/p,p}^p = |A^{k+1}u|_{L_p(\IR_+;X)}^p. \]
This is $(b)\Rightarrow (c)$. 
\end{proof}

\subsection{Trace to the boundary data}
In this subsection we consider the trace to the boundary data. 
This also needs the necessity of the maximal regularity. 
We cite the results on Denk--Hieber--Pr\"uss \cite{DenkHieberPruss2007}, which characterize the boundary trace as Triebel--Lizorkin space. 
See also \cite{PrussSimonett2016}. 

Let $L_0:=\omega + \partial_t + (-\Delta_x)^m$ in the space $X_0:= L_p(\IR_+; L_q(\IR^{n-1};E))$ with domain 
\[D(L_0):= {}_0 H^1_p(\IR_+; L_q(\IR^{n-1}; E)) \cap L_p(\IR_+; H^{2m}_q(\IR^{n-1}; E)). \]
This operator is a sectorial operator with angle $\pi/2$ and $L_0^{1/2m}$ is the negative generator of an analytic $C_0$-semigroup $e^{-y L_0^{1/2m}}$. 
Let $L$ be the canonical extension of $L_0$ to the space $L_p(\IR_+, L_q(\IR^n_+;E))$. 
\begin{lemma}\label{boundary trace}
Let $1<p,q < \infty$ and $E$ be a Banach space with property $\mathcal{HT}(\alpha)$. 
Moreover, let $L_0$ and $L$ be defined as above, and let $u(y) := e^{-yL_0^{1/2m}}g$, $g\in X_0$, $y>0$. 
Then the following assertions are equivalent. \\
${\rm (i)}$~$u\in{}_0 H^{1/2m}_p(\IR_+; L_q(\IR_+\times \IR^{n-1}; E))\cap L_p(\IR_+; H^1_q(\IR_+\times \IR^{n-1}; E))$\\
${\rm (ii)}$~$L^{1/2m}u\in L_p(\IR_+; L_q(\IR_+\times \IR^{n-1}; E))$\\
${\rm (iii)}$~$g\in {}_0 F^{1/2m-1/2mq}_{p,q}(\IR_+; L_q(\IR^{n-1}; E))\cap L_p(\IR_+; B^{1-1/q}_{q,q}(\IR^{n-1};E))$. \\
Similar statements are valid on $\IR$; replace the symbols $L_p(\IR_+;\cdot)$ and ${}_0K_p(\IR_+; \cdot)$ by $L_p(\IR; \cdot)$ and $K_p(\IR; \cdot)$, respectively. 
\end{lemma}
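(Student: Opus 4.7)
The plan is to establish the three-way equivalence via the chain (i) $\Rightarrow$ (ii) $\Rightarrow$ (iii) $\Rightarrow$ (i), combining a joint $H^\infty$-calculus argument for (i) $\Leftrightarrow$ (ii) with an analytic-semigroup trace theorem in a mixed-norm setting for (ii) $\Leftrightarrow$ (iii). For the first equivalence I will exploit that, under $E \in \HT(\alpha)$, both $\partial_t$ on ${}_0H^1_p(\IR_+; E)$ and $(-\Delta_x)^m$ on $L_q(\IR^{n-1}; E)$ are sectorial of angle $<\pi/2$ and admit a bounded $H^\infty$-calculus. The Dore--Venni theorem for commuting sectorial operators then yields that $L_0 = \omega + \partial_t + (-\Delta_x)^m$, and hence its canonical extension $L$, has bounded imaginary powers with spectral angle below $\pi/2$. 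Applying the mixed-derivatives theorem, I would identify $\dom(L^{1/2m})$ with $\dom(\partial_t^{1/2m}) \cap \dom((-\Delta_x)^{1/2})$ up to equivalence of norms. Since $u(y) = e^{-yL_0^{1/2m}}g$ satisfies $\partial_y u = -L^{1/2m}u$, this gives that condition (i) --- $\partial_t^{1/2m}u$, $\nabla_x u$, and $\partial_y u$ each lying in $L_p(L_q)$ --- is equivalent to $L^{1/2m}u \in L_p(L_q)$, namely (ii).

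For (ii) $\Leftrightarrow$ (iii) I will view the problem through the analytic semigroup $e^{-yA}$ with $A := L_0^{1/2m}$ acting on $Y := L_p(\IR_+^t; L_q(\IR^{n-1}_x; E))$. Writing
\[
\|L^{1/2m}u\|_{L_p(\IR_+^t;\, L_q(\IR_+^y \times \IR^{n-1}_x; E))}^p = \int_{\IR_+^t} \left(\int_{\IR_+^y} \|A e^{-yA}g(t,\cdot)\|_{L_q(\IR^{n-1}; E)}^q \, dy\right)^{p/q} dt
\]
exhibits (ii) as a \emph{mixed-norm} trace condition with $L_q(dy)$ placed inside $L_p(dt)$; this very ordering is what forces a Triebel--Lizorkin, rather than Besov, characterization in the time direction. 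I would then split $A$ through the joint functional calculus into its time-fractional and spatial components and apply the operator-valued description of $F^s_{p,q}$ via sectorial operators (integrability of $y^{1-s}Ae^{-yA}g$ in $L_p(L_q(dy/y))$): this produces exactly the index $1/2m - 1/(2mq)$ in the time slot, the factor $2m$ encoding the parabolic scaling $(\text{space})^{2m} \sim \text{time}$. In the purely spatial slot the outer and inner integration exponents coincide at $q$, so the same mechanism yields the Besov space $B^{1-1/q}_{q,q}$.

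I expect the main obstacle to be precisely this passage from the mixed-norm condition (ii) to the explicit $F \cap L_p(B)$ description in (iii). It demands an operator-valued Littlewood--Paley decomposition of $e^{-yA}$ that cleanly separates the time and space actions while respecting the outer-$L_p$/inner-$L_q$ ordering; property $(\alpha)$ of $E$ enters crucially here, since it is what allows one to $R$-bound the relevant dyadic operator families and thereby to invoke the operator-valued Mikhlin multiplier theorem. The zero-trace version ${}_0F$ of the Triebel--Lizorkin space will record the vanishing behaviour $u(\cdot, y) \to g$ as $y\downarrow 0$ compatible with the semigroup representation. The variant on $\IR$ follows by rerunning the same arguments with the whole-line $\partial_t$ replacing its zero-initial-trace restriction.
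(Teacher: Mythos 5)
First, note that the paper itself does not prove this lemma: it is quoted verbatim from Denk--Hieber--Pr\"uss \cite{DenkHieberPruss2007} (see also \cite{PrussSimonett2016}), so you are attempting something the paper deliberately outsources. Your architecture does match the known proof: ${\rm (i)}\Leftrightarrow{\rm (ii)}$ via Dore--Venni, the joint $H^\infty$-calculus and the mixed-derivative theorem applied to $L=\omega+\partial_t+(-\Delta_{x'})^m$, together with the identity $\partial_y u=-L^{1/2m}u$ to recover the normal derivative; and ${\rm (ii)}\Leftrightarrow{\rm (iii)}$ as a trace/interpolation statement for the analytic semigroup $e^{-yL_0^{1/2m}}$. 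The index bookkeeping ($\frac{1}{2m}(1-\frac1q)=\frac{1}{2m}-\frac{1}{2mq}$ in time, $B^{1-1/q}_{q,q}$ in space) is correct.

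The genuine gap is in the step you yourself flag as the main obstacle: the passage from the mixed-norm condition $\int_t\bigl(\int_y\|Ae^{-yA}g\|_{L_q(\IR^{n-1};E)}^q\,dy\bigr)^{p/q}dt<\infty$ to the space ${}_0F^{1/2m-1/2mq}_{p,q}(\IR_+;L_q)\cap L_p(\IR_+;B^{1-1/q}_{q,q})$. You propose to ``apply the operator-valued description of $F^s_{p,q}$ via sectorial operators,'' but for an operator $A=(\omega+\partial_t+(-\Delta_{x'})^m)^{1/2m}$ that mixes the time and tangential variables, with the $L_q(dy/y)$ norm taken \emph{inside} the outer $L_p(dt)$ norm, no such off-the-shelf description exists --- establishing it is precisely the content of the lemma and occupies the bulk of Section 3 of \cite{DenkHieberPruss2007} (a Littlewood--Paley decomposition jointly in $t$ and $x'$, $R$-boundedness of the resulting dyadic symbol families using property $(\alpha)$, and the operator-valued Mikhlin theorem). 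Invoking the description is therefore circular at the one point where the work lies. Two smaller inaccuracies: $\partial_t$ on ${}_0H^1_p(\IR_+;\cdot)$ is sectorial of angle exactly $\pi/2$, not $<\pi/2$ (the Dore--Venni hypothesis $\theta_{\partial_t}+\theta_{(-\Delta)^m}<\pi$ still holds, and it is $L_0^{1/2m}$, of angle $\le\pi/4m$, that generates the analytic semigroup); and the subscript in ${}_0F^{1/2m-1/2mq}_{p,q}(\IR_+;\cdot)$ records the vanishing time-trace at $t=0$ inherited from ${}_0H^1_p(\IR_+;\cdot)$, not the behaviour of $u(\cdot,y)$ as $y\downarrow 0$.
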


\subsection{Higher regularity for the elliptic equations}
This subsection we collect the results on higher regularity for the elliptic equations. 
All results are written in \cite{PrussSimonett2016}. 
We do not need the compatibility conditions, which is different from parabolic problems. 

\begin{theorem}\label{elliptic}
If $f\in H^k_q(\Omega; E)$ and $g_j\in B^{2m\kappa_j + k}_{q,q}(\g; E)$, then the solution of the elliptic problem 
\begin{align}\label{eq_ellip}
\left\{
	\begin{aligned}
		(\omega + \cA(x, D)) u
			&= f(x) \quad
				&(x\in G), \\
		\cB_j(x,D) u
			&= g_j(x) \quad 
				&(x\in \g, j=1,\ldots, m)
	\end{aligned}
\right. 
\end{align}
has a unique solution in $H^{2m+k}_q(\Omega; E)$, provided $\cA(x,D)$ is normally elliptic, the Lopatinskii--Shapiro condition ${\rm (LS)}$ holds, $\omega > {\sf s}(-A):=\sup{\mathrm {Re}}\,\sigma(-A)$, $\g \in C^{2m+k}$, and the coefficients satisfy the regularity conditions ${\rm (R_k)}$. 
\end{theorem}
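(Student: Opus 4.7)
The plan is to argue by induction on $k \in \IN_0$. The base case $k=0$ is the classical maximal $L_q$-regularity result for the boundary value problem \eqref{eq_ellip} under the normal ellipticity and Lopatinskii--Shapiro hypotheses, which is standard (cf. \cite{PrussSimonett2016}). Assume the result holds with $k-1$ in place of $k$, and let $(f, \{g_j\})$ satisfy the hypotheses for index $k$. By the inductive hypothesis applied to these same data, which in particular lie in the spaces for index $k-1$ by standard embeddings, there is a unique solution $u \in H^{2m+k-1}_q(\Omega;E)$. The goal is then to upgrade this to $u \in H^{2m+k}_q(\Omega;E)$ with the stated estimate.

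First I would localize via a smooth partition of unity $\{\chi_\ell\}$ subordinate to a cover of $\overline{G}$ by finitely many charts: interior charts on which $\chi_\ell u$ is compactly supported in $G$, and boundary charts in which a $C^{2m+k}$ diffeomorphism flattens $\g$ to $\{y_n = 0\}$ and $G$ to the upper half space. On each interior chart, applying any $\partial_i$ yields an elliptic equation for $\partial_i(\chi_\ell u)$ whose right-hand side has the form $\chi_\ell \partial_i f + R_\ell$, where $R_\ell$ collects the commutators of $\partial_i$ with $\cA(x,D)$ and with multiplication by $\chi_\ell$; using $({\rm R_k})$ and pointwise product estimates in $H^{k-1}_q$, these terms belong to $H^{k-1}_q$ thanks to $u \in H^{2m+k-1}_q$. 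The interior conclusion thus follows from the base case applied to $\partial_i(\chi_\ell u)$ on $\IR^n$.

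For the boundary charts the argument is parallel but more delicate. Applying any tangential derivative $\partial_i$ with $i<n$ preserves both the half-space and its boundary, and the resulting elliptic boundary problem for $\partial_i(\chi_\ell u)$ has right-hand side and boundary data in the correct $(k-1)$-classes by the same commutator/product estimates (now also for $\cB_j$ via the trace spaces appearing in $({\rm R_k})$), so the base case gives $\partial_i(\chi_\ell u) \in H^{2m+k-1}_q$. The missing normal regularity is then recovered algebraically: normal ellipticity makes the coefficient of $\partial_n^{2m}$ invertible, so one solves the principal equation for $\partial_n^{2m}(\chi_\ell u)$ in terms of tangential derivatives and lower-order normal ones, and iterates this identity to obtain $\partial_n^{2m+k}(\chi_\ell u)$ from the tangential gain.

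The technically hardest part is the careful bookkeeping of the commutator and product terms that drive the inhomogeneous data: they involve multiplications of $u$ and its derivatives (of order up to $2m+k-1$) by coefficients from the subtle spaces $H^k_{r_l} + H^k_\infty$ and $B^{2m\kappa_j+k}_{r_{jl},q}$ prescribed by $({\rm R_k})$. The assumptions $2m+k-l > n/r_l$ and $2m\kappa_j+k > (n-1)/r_{jl}$ are precisely the Sobolev embeddings required for these products, and their traces on $\g$, to land again in $H^{k-1}_q(\Omega;E)$ and $B^{2m\kappa_j+k-1}_{q,q}(\g;E)$. Once these product estimates are in place the induction closes, and the continuous dependence estimate is inherited from the $k=0$ bound applied at each step together with the finite partition of unity.
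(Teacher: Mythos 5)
The paper does not actually prove Theorem~\ref{elliptic}: it is quoted verbatim from Pr\"uss--Simonett \cite{PrussSimonett2016} (the subsection explicitly says ``All results are written in \cite{PrussSimonett2016}''), so there is no in-paper argument to compare against. Your sketch is the standard route to higher elliptic regularity --- induction on $k$, localization by a partition of unity and boundary-flattening charts, tangential differentiation with commutator bookkeeping in the $({\rm R}_k)$ coefficient classes, and algebraic recovery of the normal derivatives using the invertibility of $\cA_\#(x,e_n)$ (which normal ellipticity indeed guarantees) --- and this is essentially how the result is established in the literature, so as an outline it is sound. One step deserves more care than you give it: in the inductive step you cannot simply ``apply the case $k-1$ to $\partial_i(\chi_\ell u)$,'' because a priori $\partial_i(\chi_\ell u)$ only lies in $H^{2m+k-2}_q$, which is outside the uniqueness class of the inductive hypothesis, so you cannot identify it with the regular solution that the hypothesis produces for the differentiated data. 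The standard repair is to work with difference quotients $D_i^h(\chi_\ell u)\in H^{2m+k-1}_q$, apply the a priori estimate of the inductive hypothesis uniformly in $h$, and pass to the limit; the ingredients for this are present in your sketch (you note the estimate is inherited at each step) but the argument as written skips it. With that fixed, and with uniqueness in $H^{2m+k}_q$ deduced from the embedding into $H^{2m}_q$ and the base case, your proposal is a legitimate self-contained proof of a statement the paper only cites.
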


\section{Proof of the theorem \ref{main}} \label{proof}
\begin{proof}[Proof of a sufficiency of the main theorem]
The proof is based on an induction on $k$. 
For the sake of simplicity of notation we prove the theorem up to the case $k+1$. 
The case $k=0$ is just a result of \cite{DenkHieberPruss2007}. 
Assume a sufficient condition of the theorem is valid for some nonnegative integer $k$, and suppose the data $(f, \{g_j\}_{j=1}^m, u_0)$ satisfy the $k+1$-th compatibility conditions. 
By differentiate the equation with respect $t$, we consider, by setting $\tildu:=\partial_t u, \tildf:=\partial_t f$ and $\tildg_j:=\partial_t g_j$, 
\begin{align}
\left\{
	\begin{aligned}
		\partial_t \tildu + \omega \tildu+\cA(x, D) \tildu
			& = \tildf(t,x) \quad
				&(t\in J,~x\in G), \\
		\cB_j(x,D) \tildu
			&= \tildg_j(t,x) \quad 
				&(t\in  J,~x\in \g, j=1,\ldots, m), \\
		\tildu(0,x)
			&= f(0,x) - (\omega + \cA(x,D))u_0 =: \tildu_0 \quad 
				&(x\in G). 
	\end{aligned}
\right. 
\end{align}
Note that 
\begin{align*}
&(\tildf, \{\tildg_j\})\in \IF^k_1 \times \prod_{j=1}^m \IF^{k, m_j}_2, \\
&\tildv_0:=\tildu_0
	\in B^{2m(k+1-\frac{1}{p})}_{q,p}(G)
		\quad {\rm with}\quad \cB_j \tildv_0 = \tildg_j|_{t=0}, \\
&\tildv_1:=\tildf|_{t=0}- (\omega + \cA(x,D)) \tildv_0
 	\in B^{2m(k-\frac{1}{p})}_{q,p}(G)
 		\quad {\rm with}\quad \cB_j \tildv_1 = (\partial_t \tildg_j)|_{t=0}, \\
&\tildv_2:=(\partial_t \tildf)|_{t=0}- (\omega + \cA(x,D)) \tildv_1
 	\in B^{2m(k-1-\frac{1}{p})}_{q,p}(G)
 		\quad {\rm with}\quad \cB_j \tildv_2 = (\partial_t^2 \tildg_j)|_{t=0}, \\
&\qquad \vdots\\
&\tildv_{k-1}:=(\partial_t^{k-2}\tildf)|_{t=0} - (\omega  + \cA(x,D))\tildv_{k-2} 
	\in B^{2m(2-\frac{1}{p})}_{q,p}(G)
		\quad {\rm with}\quad \cB_j \tildv_{k-1} = (\partial_t^{k-1} \tildg_j)|_{t=0}, \\
&\qquad \rm{and}\\
&\tildv_k:=(\partial_t^{k-1}\tildf)|_{t=0} - (\omega + \cA(x,D))\tildv_{k-1} 
	\in B^{2m(1-\frac{1}{p})}_{q,p}(G)
		\quad  {\rm with}\quad \cB_j \tildv_k = (\partial_t^k \tildg_j)|_{t=0}
	\quad {\rm if} \quad \kappa_j >\frac{1}{p}. 
\end{align*}
Namely $(\tildf, \{\tildg_j\}_{j=1}^m, \tildu_0)$ satisfy the $k$-th compatibility. 
Thus applying the induction assumption, we deduce $\tildu \in \IE^{k}$ and 
\[\|\tildu\|_{\IE^k} \leq C \left(\|\tildf\|_{\IF^k_1} + \sum_{j=1}^m \|\tildg_j\|_{\IF^{k,m_j}_2} + \|\tildu_0\|_{B^{2m(k+1-\frac{1}{p})}_{q,p}(G)} \right) . \]
This implies 
\[\partial_t u\in \IE^k = H^{k+1}_p(J; L_q(G))\cap L_p(J; H^{2m(k+1)}_{q}(G))\]
and 
\begin{align}
\|\partial_t u\|_{\IE^k} 
&\leq C \left(\|\partial_t f\|_{\IF^k_1} + \sum_{j=1}^m \|\partial_t g_j\|_{\IF^{k,m_j}_2} + \|f(0) - (\omega + \cA(x,D))u_0\|_{B^{2m(k+1-\frac{1}{p})}_{q,p}(G)} \right) \nonumber\\
&\leq C\left(\|f\|_{\IF^{k+1}_1} + \sum_{j=1}^m \|g_j\|_{\IF^{k+1, m_j}_2} + \|u_0\|_{B^{2m(k+2-\frac{1}{p})}_{q,p}(G)} \right) \label{time es}. 
\end{align}
Here we used the estimate 
\[\|f|_{t=0}\|_{B^{2m(k+1-\frac{1}{p})}_{q,p}(G)} \leq C\|f\|_{\IF^{k+1}_1}. \]

We next consider the following the higher order {\it elliptic problems} for each fixed $t$; 
\begin{align}
\left\{
	\begin{aligned}
		(\omega + \cA(x, D)) u
			&= f(t,x) - \partial_t u \quad
				&(x\in G), \\
		\cB_j(x,D) u
			 &= g_j(t,x) \quad 
				&(x\in \g, j=1,\ldots, m). 
	\end{aligned}
\right. 
\end{align}
Under the assumptions on the regularity of the coefficients ${\rm (R_{k+1})}$, we have $u(t) \in H^{2m(k+2)}_q(G)$ by theorem \ref{elliptic} and 
\begin{align*}
\|u(t)\|_{H^{2m(k+2)}_{q}(G)} 
&\leq C \left(\|f(t) - \partial_t u(t) \|_{H^{2m(k+1)}_q(G)} + \sum_{j=1}^m \|g_j(t)\|_{B^{2m(k+2)-m_j-\frac{1}{q}}_{q,q}(\g)} + \|u(t)\|_{L_q(G)} \right) \\
&\leq C\left( \|f(t)\|_{H^{2m(k+1)}_q(G)} + \|\partial_t u(t)\|_{H^{2m(k+1)}_q(G))} + \sum_{j=1}^m \|g_j(t)\|_{B^{2m(k+1)+\kappa_j}_{q,q}(\g)}+ \|u(t)\|_{L_q(G)}\right)
\end{align*}
for a.e. $t$. 
We take $L_p(J)$, then, using the estimate \eqref{time es},  
\begin{align*}
&\|u\|_{L_p(J; H^{2m(k+2)}_{q}(G))} \\
\leq& C \left( \|f\|_{L_p(J; H^{2m(k+1)}_q(G))} + \|\partial_t u\|_{L_p(J; H^{2m(k+1)}_q(G))} + \sum_{j=1}^m \|g_j\|_{Lp(J; B^{2m(k+1)+\kappa_j}_{q,q}(\g))}+ \|u\|_{L_p(J; L_p(G))}\right) \\
\leq& C \left( \|f\|_{\IF^{k+1}_1} + \sum_{j=1}^m \|g_j\|_{\IF^{k+1,m_j}_2} + \|u_0\|_{B^{2m(k+2-\frac{1}{p})}_{q,p}(G)} + \|u\|_{L_p(J; L_q(G))}\right). 
\end{align*}
We again consider the estimate \eqref{time es}, we deduce 
\[\|u\|_{\IE^{k+1}} \leq C \left(\|f\|_{\IF^{k+1}_1} + \sum_{j=1}^m \|g_j\|_{\IF^{k+1,m_j}_2} + \|u_0\|_{B^{2m(k+2-\frac{1}{p})}_{q,p}(G)}\right)\]
since we have the estimate 
\[\|u\|_{L_p(J; L_q(G))} \leq C \left(\|f\|_{\IF^{k+1}_1} + \sum_{j=1}^m \|g_j\|_{\IF^{k+1,m_j}_2} + \|u_0\|_{B^{2m(k+2-\frac{1}{p})}_{q,p}(G)}\right). \]
This proves the sufficient conditions to get the higher order maximal $L_p$-$L_q$ regularity solutions. 
\end{proof}

\begin{proof}[Proof of a neccessity of the main theorem]
For the sake of simplicity we prove the case that $G=\IR^n_+$. 
The desired case is straightforward by coordinate transformation.  
Let $u\in \IE^k$ be the solution of \eqref{eq_para}. 
Then we have $f\in \IF^k_1$. 
We extend the function $u$ in space so that $E_x u \in H^{k+1}_p(J; L_q(\IR^n))\cap L_p(J; H^{2m(k+1)}_{q}(\IR^n))$, where $E_x$ is an extension operator from $\IR^n_+$ to $\IR^n$. 
From the lemma \ref{initial trace}, we know 
\[ E_x u|_{t=0}\in B^{2m(k+1-\frac{1}{p})}_{q,p}(\IR^n; E),\]
which implies $u_0\in B^{2m(k+1-\frac{1}{p})}_{q,p}(\IR^n_+; E)$. 
Moreover, since $\partial_t u = f - (\omega + \cA) u=:v_1$ and 
\begin{align*}
\partial_t^2 u 
	&= \partial_t (f - (\omega +\cA) u)\\
	&= \partial_t f - (\omega+\cA)v_1=:v_2\\
\partial_t^3 u 
	&= \partial_t (\partial_t f - (\omega+\cA)v_1)\\
	&= \partial^2_t f - (\omega + \cA)v_2=:v_3\\
\vdots&
\end{align*}
at $t=0$ derives the compatibility conditions whose regularity is same as the theorem. 

Next we consider the tangential part of $y:=x_n=0$. 
We extend the function $u$ in time so that $E_t u =:v \in H^{k+1}_p(\IR; L_q(\IR^n_+))\cap L_p(\IR; H^{2m(k+1)}_{q}(\IR^n_+))$, where $E_t$ is an extension operator from $\IR_+$ to $\IR$. 
Hence $w:=(\omega + \partial_t)^\alpha \partial_y ^l D_x^\beta v$ belongs $H^{1/2m}_p(\IR; L_q(\IR^n_+;E)\cap L_p(\IR; H^1_q(\IR^n_+;E))$ if $2m\alpha + l + |\beta| = k + 2m - 1$. 
Define the function $\overw$ by the solution of 
\[\partial_y \overw + L_0^{1/2m}\overw = \partial_y w + L_0^{1/2m} w,~y>0,\quad\overw(0)=0.\]
This function also belongs to $H^{1/2m}_p(\IR; L_q(\IR^n_+;E)\cap L_p(\IR; H^1_q(\IR^n_+;E))$ since  $L_0$ satisfies the maximal regularity, see PS. 
Hence $w-\overw = e^{-y L^{1/2m}_0} w|_{y=0}$ has same regularity as well. 
Then we are able to use lemma \ref{boundary trace} to get 
\[w|_{y=0} \in F^{\frac{1}{2m}-\frac{1}{2mq}}_{p,q}(\IR; L_q(\IR^{n-1};E)) \cap L_p(\IR; B^{1-\frac{1}{q}}_{q,q}(\IR^{n-1};E)). 
\]
By definition of $w$ and proper choices of $\beta$ and $l$, this yields 
\[\cB_j(x,D) v \in F^{k+\kappa_j}_{p,q}(\IR; L_q(\IR^{n-1};E)) \cap L_p(\IR; B^{2mk + 2m\kappa_j}_{q,q}(\IR^{n-1};E)), \]
by restriction to $t>0$; we finally obtain $g_j \in \IF^{k,m_j}_2$. 
This complete the necessity of the main theorem. 
\end{proof}

\end{document}